\newtheorem{thm}{Theorem}[section]
\newtheorem{prop}[thm]{Proposition}
\theoremstyle{definition}
\newtheorem{question}[thm]{Question}
\theoremstyle{remark}
\let\c@equation\c@thm
\numberwithin{equation}{section}
\title{Computation of a Definite Integral}
\author{Juan Arias de Reyna}
\address{Facultad de Matem\'aticas \\
Univ.~de Sevilla \\
Apdo.~1160
 \\
41080-Sevilla \\
Spain} 
\thanks{Supported by  MINECO grant MTM2012-30748.}
\email{arias@us.es}
\date{\today}
\begin{document}

\newcommand{\N}{{\mathbb N}}
\newcommand{\R}{{\mathbb R}}
\newcommand{\C}{{\mathbb C}}
\newcommand{\Z}{{\mathbb Z}}
\newcommand{\Q}{{\mathbb Q}}
\newcommand{\arctanh}{\mathop{\rm arctanh }}
\def\Re{\operatorname{Re}}
\def\Im{\operatorname{Im}}
\newfont{\cmbsy}{cmbsy10}
\newcommand{\Orden}{\mathop{\hbox{\cmbsy O}}\nolimits}

\begin{abstract}
As an  application  of Cauchy's Theorem we prove that 
\[\int_0^1\arctan\Bigl(\frac{\arctanh x-\arctan x}{\pi+\arctanh x-\arctan x}\Bigr)
\frac{dx}{x}= \frac{\pi}{8}\log\frac{\pi^2}{8}\]
answering a question first posted in Mathematics  Stack Exchange and then in MathOverflow.
\end{abstract}

\maketitle

\section{Introduction}

In 2013, on  August 11  a user with nickname  ``larry'' \cite{larry} posted the following question 
in \emph{Mathematics Stack Exchange} 
\begin{question}
How to prove that
\begin{equation}\label{question}
\int_0^1\arctan\Bigl(\frac{\arctanh x-\arctan x}{\pi+\arctanh x-\arctan x}\Bigr)
\frac{dx}{x}= \frac{\pi}{8}\log\frac{\pi^2}{8}?
\end{equation}
\end{question}

Mathematics Stack Exchange is a question and answer site for people 
studying math at any level, and professionals in related fields. Many times
we see there questions of homework level. The above question received many comments
but not a genuine answer. 

Later, in 2014, on the 18th of January, the same question was also posted in 
MathOverflow \cite{zy},
which is a question and answer site for professional 
mathematicians.
The question in MathOverflow was posted by a user with nickname ``zy\_''.
Again, until now, we have not found a proof of \eqref{question} in MathOverflow.  

In Mathematics  Stack  Exchange ( October 10, 2013 ) we also find a related 
integral posted by Vladimir 
Reshetnikov \cite{Reshetnikov}.
This received two answers but no solution to 
question \eqref{question}. 

Although some of the comments ask for the origin of the value of the 
integral, this question has not been answered by larry. 
Given the powerful methods existing today to guess such values 
\cite{exp}, there is a possibility that the given value was just an educated guess.
Therefore, we think a proof of \eqref{question} deserves to be published.  It is 
a good example of complex integration, multivalued functions, 
and Cauchy's Theorem.

\section{Preparations.}

\begin{prop}
The integral in \eqref{question} is well defined.
\end{prop}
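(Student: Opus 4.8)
The plan is to show that the integrand is a genuine function on the open interval $(0,1)$ and that it extends continuously to the closed interval $[0,1]$, so that the integral is a proper, and in particular convergent, Riemann integral. First I would check that the argument of the outer $\arctan$ is well defined. For $x\in(0,1)$ both $\arctanh x$ and $\arctan x$ are defined and smooth, and the elementary inequalities $\arctan x < x < \arctanh x$ (read off from the series $\arctan x = x - x^3/3 + \cdots$ and $\arctanh x = x + x^3/3 + \cdots$) show that the numerator $\arctanh x - \arctan x$ is strictly positive. The denominator $\pi + \arctanh x - \arctan x$ then exceeds $\pi$, so it never vanishes; hence the quotient is a positive continuous function of $x$ on $(0,1)$, and so is the whole integrand, which I denote $f(x)$.

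It remains to control $f$ at the two endpoints. Near $x=1$ the quantity $\arctanh x$ blows up while $\arctan x\to\pi/4$ stays bounded, so numerator and denominator both tend to $+\infty$ with their ratio tending to $1$; therefore the outer $\arctan$ tends to $\arctan 1 = \pi/4$ and $f(x)\to\pi/4$, a finite limit. The delicate endpoint is $x=0$, where the explicit factor $1/x$ threatens a non-integrable singularity. Here I would exploit the cancellation of the linear (and indeed the $x^{5}$) terms in the difference of the two series: subtracting them gives $\arctanh x - \arctan x = \tfrac{2}{3}x^{3} + O(x^{7})$. Consequently the argument of the outer $\arctan$ is asymptotic to $\tfrac{2}{3\pi}x^{3}$, whence $\arctan(\cdot)\sim\tfrac{2}{3\pi}x^{3}$ as well, and dividing by $x$ yields $f(x)\sim\tfrac{2}{3\pi}x^{2}\to 0$ as $x\to 0^{+}$.

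Taken together, these estimates show that $f$ extends to a continuous (hence bounded) function on the compact interval $[0,1]$, so the integral in question exists as a finite, absolutely convergent integral. The only step requiring any care is the behaviour at $x=0$; everything else is an immediate continuity or positivity check. The main point is simply that the cubic vanishing of $\arctanh x - \arctan x$ more than compensates for the $1/x$ weight, so the integrable singularity is only apparent.
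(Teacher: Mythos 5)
Your proof is correct and takes essentially the same approach as the paper: both arguments show the integrand is continuous and bounded on the interval, the key point being that $\arctanh x-\arctan x$ vanishes to third order at $x=0$, which neutralizes the $1/x$ factor. The paper reads this off from the integral representation $\arctanh x-\arctan x=\int_0^x \frac{2t^2}{1-t^4}\,dt$ rather than from the Taylor series, but the two are interchangeable.
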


\begin{proof}
For $-1<x<1$ we have
\[\arctanh x=\int_0^x \frac{dt}{1-t^2},\quad \arctan x=\int_0^x\frac{dt}{1+t^2}\]
so that 
\begin{equation}\label{def f}
\arctanh x-\arctan x=\int_0^x \frac{2t^2}{1-t^4}\,dt.
\end{equation}
Therefore,
\begin{equation}\label{gooddef}
f(x):=\frac{1}{\pi}(\arctanh x-\arctan x)
\end{equation}
is a  differentiable, strictly increasing and non negative function on $[0,1)$. 
It follows that $\frac{f(x)}{1+f(x)}$ is continuous. Also $f(0)=0$ so that 
\[\arctan\Bigl(\frac{\arctanh x-\arctan x}{\pi+\arctanh x-\arctan x}\Bigr)
\frac{1}{x}=\arctan\Bigl(\frac{f(x)}{1+f(x)}\Bigr)\frac{1}{x}\]
is continuous and bounded in $[0,1)$. 
\end{proof}

In this paper  $\log z$ always denotes the main branch of the logarithmic 
function defined by $\log z=\log|z|+i\arg(z)$, with $|\arg(z)|<\pi$. This function 
is analytic in the complex plane with a cut along the  negative real axis. 

Let $\Omega\subset\C$ be the complex plane with four cuts, two along the real axis, one
from $1$ to $+\infty$, the other from $-1$ to $-\infty$, and  two along the imaginary 
axis, one from $i$ to $+i\infty$ the other from $-i$ to $-i\infty$. 
This is a star-shaped open set with center at $0$.

\begin{prop}
The function $f(x)$ defined in \eqref{gooddef} extends to an analytic function 
on the simply connected open set $\Omega$ and we have 
\begin{equation}\label{logform}
f(z)=\frac{1}{2\pi}\Bigl(\log\frac{1+z}{1-z}+i\log\frac{1+iz}{1-iz}\Bigr), \qquad 
z\in\Omega
\end{equation}
and
\begin{equation}\label{power1}
f(z)=\frac{2}{\pi}\sum_{n=0}^\infty \frac{z^{4n+3}}{4n+3},\qquad |z|<1.
\end{equation}
\end{prop}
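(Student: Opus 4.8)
The plan is to introduce the candidate function
\begin{equation*}
g(z):=\frac{1}{2\pi}\Bigl(\log\frac{1+z}{1-z}+i\log\frac{1+iz}{1-iz}\Bigr),
\end{equation*}
to prove that $g$ is analytic on $\Omega$, to check that it coincides with $f$ on the real segment $(-1,1)$, and then to invoke the identity theorem to conclude $f\equiv g$ on all of $\Omega$; formula \eqref{logform} and the claimed analytic continuation follow at once. The agreement on $(-1,1)$ rests on the classical logarithmic representations $\arctanh x=\tfrac12\log\frac{1+x}{1-x}$ and $\arctan x=\tfrac1{2i}\log\frac{1+ix}{1-ix}$, valid for $-1<x<1$; substituting these into \eqref{gooddef} and using $\tfrac1{2i}=-\tfrac{i}{2}$ reproduces $g(x)$ verbatim.

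The step I expect to be the main obstacle is pinning down the domain of analyticity of $g$ \emph{exactly}, i.e.\ verifying that the two logarithms contribute precisely the four cuts defining $\Omega$ and nothing more. Each summand is a logarithm composed with a M\"obius transformation, so the only possible singularities come from the branch cut $(-\infty,0]$ of $\log$ together with the pole of the map. For the first term I would study $w=\frac{1+z}{1-z}$: it maps the real axis onto itself and preserves the upper half-plane, so $w$ is a non-positive real precisely when $z$ is real with $|z|\ge 1$, producing the two cuts $(-\infty,-1]$ and $[1,+\infty)$. For the second term the substitution $u=iz$ reduces it to the first case, and multiplying the resulting singular set $\{u\in\R:|u|\ge1\}$ by $-i$ yields the two imaginary cuts $[i,i\infty)$ and $[-i,-i\infty)$. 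Hence off the coordinate axes no cut appears, and the four cuts of the two logarithms are exactly those of $\Omega$, so $g$ is analytic on $\Omega$.

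Since $\Omega$ is connected and $f=g$ on the interval $(-1,1)\subset\Omega$, which has a limit point in $\Omega$, the identity theorem forces $f=g$ throughout $\Omega$, establishing \eqref{logform}.

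Finally, for the power series \eqref{power1} I would work directly from \eqref{def f}. For $|t|<1$ the geometric expansion gives $\frac{2t^2}{1-t^4}=2\sum_{n\ge0}t^{4n+2}$, a series converging uniformly on compact subsets of the unit disc; integrating \eqref{def f} term by term from $0$ to $z$ with $|z|<1$ then yields $\pi f(z)=2\sum_{n\ge0}\frac{z^{4n+3}}{4n+3}$, which is \eqref{power1}. As a consistency check one can instead expand the two logarithms in \eqref{logform}: the even-index terms cancel and only the exponents $4n+3$ survive, reproducing the same series.
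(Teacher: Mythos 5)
Your proof is correct, but it takes a genuinely different route from the paper. The paper constructs the analytic extension \emph{intrinsically}: it defines $f(z)=\frac{2}{\pi}\int_0^z t^2(1-t^4)^{-1}\,dt$ along segments (legitimate because $\Omega$ is star-shaped and the integrand is analytic there), derives \eqref{power1} by integrating the geometric series, and only then identifies the extension with the logarithmic expression by checking that both sides have the same Taylor series at the origin, finishing by analytic continuation. You instead take the logarithmic expression $g$ itself as the candidate extension, prove it is analytic on exactly $\Omega$ (your M\"obius-transformation analysis of the cuts is essentially the same as the paper's, and in fact slightly more careful, since you account for the zero of $\frac{1+z}{1-z}$ at $z=-1$ as well as the pole at $z=1$), and then identify $g$ with $f$ on $(-1,1)$ via the classical identities $\arctanh x=\tfrac12\log\frac{1+x}{1-x}$ and $\arctan x=\tfrac1{2i}\log\frac{1+ix}{1-ix}$ --- identities the paper never invokes --- before applying the identity theorem. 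Your route is more economical, bypassing the contour-integral construction entirely. What the paper's construction buys is that \eqref{power1} for \emph{complex} $z$ falls out immediately; in your write-up, ``integrating \eqref{def f} term by term from $0$ to $z$'' is literally justified only for real $z$, since \eqref{def f} is a real-variable statement, so strictly speaking you need one more application of the identity theorem (or the observation that $g'(z)=\frac{2}{\pi}\frac{z^2}{1-z^4}$ with $g(0)=0$) to pass to $|z|<1$. This is a minor imprecision rather than a gap, and your own ``consistency check'' --- expanding the two logarithms in \eqref{logform} and watching the even-type terms cancel --- is in fact the cleanest complete derivation of \eqref{power1} within your approach; I would promote it from a check to the main argument.
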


\begin{proof}
We may write 
\begin{equation}\label{ps f}
f(z)=\frac{2}{\pi}\int_0^z \frac{t^2dt}{1-t^4},\qquad z\in\Omega.
\end{equation}
It is clear that this defines an analytic function in $\Omega$. We may integrate 
along the segment joining $0$ to $z\in\Omega$, which by the star-shaped property of $\Omega$
is contained in $\Omega$ where the integrand $t^2(1-t^4)^{-1}$ is analytic.

When $|z|<1$ we may integrate the Taylor expansion
\[\frac{t^2}{1-t^4}=\sum_{n=0}^\infty t^{4n+2}\]
which proves \eqref{power1}.

The expression
$\frac{1+z}{1-z}$ is a negative real number just in case $z$ is real and $|z|>1$. 
Therefore, $\log \frac{1+z}{1-z}$ is well defined and analytic in $\Omega$. In the 
same way we show that $\log \frac{1+iz}{1-iz}$ is  well defined and analytic in 
$\Omega$. Therefore, the right hand side of \eqref{logform} is an analytic function 
in $\Omega$. Expanding in power series we see that the Taylor series of this 
right hand side function coincides with the power series in \eqref{power1}. 

From this it is clear that we have equality in \eqref{logform} for $|z|<1$, and by analytic continuation we have equality for all $z\in\Omega$.
\end{proof}

\begin{prop}
We have
\begin{equation}\label{firstvalue}
I:= \int_0^1\arctan\Bigl(\frac{\arctanh x-\arctan x}{\pi+\arctanh x-\arctan x}\Bigr)
\frac{dx}{x}=\Im\int_0^1 \log\bigl(1+(1+i)f(z)\bigr)\frac{dz}{z}.
\end{equation}
\end{prop}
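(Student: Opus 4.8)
The plan is to recognize the real integrand as the imaginary part of a principal logarithm evaluated along the real segment $[0,1]$, so that the passage from $\arctan$ to $\Im\log$ is just a matter of elementary complex arithmetic together with a branch check. First I would simplify the argument of the arctangent. By \eqref{gooddef} we have $\arctanh x-\arctan x=\pi f(x)$, so for $0\le x<1$
\[\frac{\arctanh x-\arctan x}{\pi+\arctanh x-\arctan x}=\frac{\pi f(x)}{\pi+\pi f(x)}=\frac{f(x)}{1+f(x)},\]
reducing the integrand to $\arctan\Bigl(\frac{f(x)}{1+f(x)}\Bigr)\frac1x$, exactly the form already recorded in the proof that the integral is well defined.

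Next I would establish the pointwise identity relating this to the logarithm. Consider the complex number $w:=1+(1+i)f(x)=(1+f(x))+if(x)$. Since $f$ is nonnegative on $[0,1)$, its real part satisfies $\Re w=1+f(x)\ge 1>0$, so $w$ lies in the open right half plane. For any such $w$ the principal logarithm has imaginary part $\Im\log w=\arg w=\arctan\Bigl(\frac{\Im w}{\Re w}\Bigr)$, the arctangent automatically taking values in $(-\pi/2,\pi/2)$ with no additive ambiguity. Substituting $\Re w=1+f(x)$ and $\Im w=f(x)$ gives
\[\Im\log\bigl(1+(1+i)f(x)\bigr)=\arctan\Bigl(\frac{f(x)}{1+f(x)}\Bigr).\]

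Finally, because the integration in \eqref{firstvalue} is along the real segment $z=x\in[0,1]$, where $f(z)=f(x)$ is real and $\frac{dz}{z}=\frac{dx}{x}$, the imaginary part commutes with the resulting real integral. Combining the two displays then yields
\[I=\int_0^1\Im\log\bigl(1+(1+i)f(x)\bigr)\frac{dx}{x}=\Im\int_0^1\log\bigl(1+(1+i)f(z)\bigr)\frac{dz}{z},\]
which is \eqref{firstvalue}; integrability has already been guaranteed. The only delicate point, and the step I would be careful to justify, is the choice of branch: one must confirm that $1+(1+i)f(x)$ never meets the cut of the principal logarithm along the negative real axis. This is immediate here since its real part remains $\ge 1$, so the identity $\Im\log w=\arctan\bigl(\frac{f}{1+f}\bigr)$ holds with the principal branch throughout and the interchange of $\Im$ with the integral is unambiguous.
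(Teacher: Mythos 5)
Your derivation of the pointwise identity is correct and is in substance the same step on which the paper's proof rests: the paper simply asserts that $\arctan\bigl(\tfrac{t}{1+t}\bigr)=\Im\log\bigl(1+(1+i)t\bigr)$ for $t\ge 0$, while you justify it properly by observing that $w=(1+f(x))+if(x)$ lies in the right half plane, where the principal branch satisfies $\Im\log w=\arctan\bigl(\tfrac{\Im w}{\Re w}\bigr)$ with no branch ambiguity. Up to that point your argument is fine, and indeed slightly more careful than the paper's.

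The gap is in your closing claim that ``integrability has already been guaranteed.'' What was guaranteed earlier (in the proposition that the integral is well defined) is only that the real integrand $\arctan\bigl(\tfrac{f(x)}{1+f(x)}\bigr)\tfrac{1}{x}$ is continuous and bounded on $[0,1)$; that function is the \emph{imaginary} part of the complex integrand. For the right-hand side of \eqref{firstvalue} to exist at all --- and hence for the interchange $\Im\int=\int\Im$ to be legitimate --- the full complex integral $\int_0^1\log\bigl(1+(1+i)f(x)\bigr)\tfrac{dx}{x}$ must converge, and its \emph{real} part, $\tfrac{1}{x}\log\bigl|1+(1+i)f(x)\bigr|$, is unbounded as $x\to 1^-$, because $f(x)\to+\infty$ there. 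One must check that this singularity is integrable: by \eqref{logform}, $f(x)\le C\log\tfrac{1}{1-x}$ near $x=1$, so the real part grows only like $\log\log\tfrac{1}{1-x}$, which is integrable on $[0,1]$. This is exactly the estimate the paper's proof supplies and yours omits; once it is added, your proof is complete.
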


\begin{proof}
For all positive real values $x$ we have
\[\arctan\Bigl(\frac{x}{1+x}\Bigr)=\Im \log\bigl(1+(1+i)x\bigr).\]
Therefore,
\[I=\int_0^1 \Im \log\bigl(1+(1+i)f(x)\bigr)\frac{dx}{x}.\]
When $x=0$ we have $f(x)=0$, so that the above logarithm vanishes there. 
For $x$ near $1$ we have $|f(x)|\le C \log(1-x)^{-1}$ by \eqref{logform},
so that the integrand is  $\Orden(\log\log(1-x)^{-1})$. 
Therefore,
the integral 
\[\int_0^1 \log\bigl(1+(1+i)f(x)\bigr)\frac{dx}{x}\]
is well defined, completing the proof.
\end{proof}

\section{Application of Cauchy's Theorem.}

\begin{prop}\label{analytic}
The function
\begin{equation}\label{G}
G(z):=\frac{1}{z}\log\bigl(1+(1+i)f(z)\bigr)
\end{equation}
is an analytic function in the first quadrant.
\end{prop}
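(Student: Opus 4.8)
The plan is to reduce the analyticity of $G$ to a single positivity statement and then to verify that statement by tracking the real and imaginary parts of $f$ through the logarithmic formula \eqref{logform}. The open first quadrant $Q=\{z=x+iy:x>0,\ y>0\}$ avoids all four cuts of $\Omega$ (which lie on the coordinate axes), so $Q\subset\Omega$ and $f$ is analytic on $Q$ by the previous proposition; moreover $0\notin Q$, so $1/z$ is analytic there as well. Hence the only possible obstruction to the analyticity of $G$ in \eqref{G} is the branch cut of the principal logarithm, and I must show that $g(z):=1+(1+i)f(z)$ never meets the half-line $(-\infty,0]$ for $z\in Q$. I will in fact establish the stronger statement that $\Re g(z)>0$ throughout $Q$, which places $g(z)$ in the open right half-plane and so makes $\log g(z)$ analytic there.

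To compute $\Re g$, write $f=u+iv$. Then $(1+i)f=(u-v)+i(u+v)$, so $\Re g=1+(u-v)$, and it suffices to prove $u-v>-1$ on $Q$. Using \eqref{logform}, set $A=\log\frac{1+z}{1-z}$ and $B=\log\frac{1+iz}{1-iz}$, so that $2\pi f=A+iB$ and therefore
\[2\pi u=\Re A-\Im B,\qquad 2\pi v=\Im A+\Re B.\]
Writing $L_1=\Re A=\log\bigl|\frac{1+z}{1-z}\bigr|$, $\theta_1=\Im A=\arg\frac{1+z}{1-z}$, $L_2=\Re B=\log\bigl|\frac{1+iz}{1-iz}\bigr|$ and $\theta_2=\Im B=\arg\frac{1+iz}{1-iz}$, this gives $2\pi(u-v)=(L_1-L_2)-(\theta_1+\theta_2)$, so the goal becomes the inequality $(L_1-L_2)-(\theta_1+\theta_2)>-2\pi$.

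The heart of the argument -- and the step that requires the most care -- is locating the two M\"obius images $\frac{1+z}{1-z}$ and $\frac{1+iz}{1-iz}$ as $z$ ranges over $Q$, since this pins down the signs of $L_1,L_2$ and the ranges of the principal arguments $\theta_1,\theta_2$. Each of the four needed facts reduces to a one-line computation: $|1+z|^2-|1-z|^2=4x>0$ gives $L_1>0$; $|1+iz|^2-|1-iz|^2=-4y<0$ gives $L_2<0$; and taking imaginary parts, $\Im\frac{1+z}{1-z}=2y/|1-z|^2>0$ and $\Im\frac{1+iz}{1-iz}=2x/|1-iz|^2>0$ place both M\"obius images in the upper half-plane, so that their principal arguments satisfy $0<\theta_1<\pi$ and $0<\theta_2<\pi$. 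Combining these, $L_1-L_2>0$ while $\theta_1+\theta_2<2\pi$, whence $2\pi(u-v)>-2\pi$, i.e.\ $u-v>-1$ and $\Re g(z)>0$. This shows $g(z)\in\C\setminus(-\infty,0]$ on $Q$, so $\log g(z)$ and hence $G(z)=\frac{1}{z}\log g(z)$ are analytic on the first quadrant. The one subtlety to guard against is that the principal argument not wrap past $\pi$; this is exactly what the two upper-half-plane computations rule out.
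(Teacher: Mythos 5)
Your proof is correct and follows essentially the same route as the paper: both reduce analyticity to showing $\Re\bigl(1+(1+i)f(z)\bigr)>0$ on the first quadrant and establish this by locating the two M\"obius images $\frac{1+z}{1-z}$ and $\frac{1+iz}{1-iz}$ in the upper half-plane with $\bigl|\frac{1+z}{1-z}\bigr|>1$ and $\bigl|\frac{1+iz}{1-iz}\bigr|<1$, so that the two principal arguments each lie in $(0,\pi)$ while the real parts of the logarithms have favorable signs. The only difference is cosmetic: you verify the mapping facts by direct computation ($|1+z|^2-|1-z|^2=4x$, etc.), where the paper simply cites the standard behavior of these bilinear transformations.
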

\begin{proof}
We will show that $\Re\bigl(1+(1+i)f(z)\bigr)>0$ when $z$ is in the first quadrant. 
Its $\log $ is well defined
and by composition of analytic functions it will be analytic.

The bilinear function $w=\frac{1+z}{1-z}$ transforms the first quadrant into
the points $w$ with $\Im(w)>0$ and $|w|>1$. Then $\log\frac{1+z}{1-z}=a+i\varphi$
where $a>0$ and $0<\varphi<\pi$.  The bilinear transform $w=\frac{1+iz}{1-iz}$ transforms the first quadrant in 
the points $w$ with $\Im(w)>0$ and $|w|<1$. So $\log\frac{1+iz}{1-iz}=-b+i\theta$, 
where $b>0$ and $0<\theta<\pi$. 
Then, for $z$ in the first quadrant, by \eqref{logform} 
\[f(z)=\frac{1}{2\pi}(a+i\varphi-\theta-ib),\]
\[\Re(1+(1+i)f(z))=1+\frac{a+b}{2\pi}-\frac{\varphi+\theta}{2\pi}>0.\]
It is  also clear that at $z=0$ the function $G(z)$ is analytic, because $f(z)$ is 
analytic at $z=0$ and has a zero there.
\end{proof}

The function $G(z)$ in Proposition \ref{analytic} has singularities of ramification
at $z=1$ and $z=i$, but has well defined limits at all other point of the boundary
(in fact it extends analytically at these points). This follows
from the fact that  for $x$ real we have $f(x)$  real  and $f(ix)$  purely imaginary,
so that $1+(1+i)f(z)\ne0$ on the boundary of the first quadrant.

\begin{prop}
For $|z|>1$ in the first quadrant we have
\begin{equation}\label{power2}
f(z)=\frac{i-1}{2}+\frac{2}{\pi}\sum_{n=0}^\infty \frac{1}{(4n+1)z^{4n+1}}.
\end{equation}
\end{prop}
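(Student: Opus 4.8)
The plan is to mirror the derivation of the interior expansion \eqref{power1}, but now expanding about $z=\infty$. First I would differentiate the integral representation \eqref{ps f} to obtain
\[f'(z)=\frac{2}{\pi}\,\frac{z^2}{1-z^4},\]
which is legitimate because $f$ is analytic on the open first quadrant with $|z|>1$ (this region lies in $\Omega$, avoiding the cuts). For $|z|>1$ I would then rewrite
\[\frac{z^2}{1-z^4}=-\frac{1}{z^2}\cdot\frac{1}{1-z^{-4}}=-\sum_{n=0}^\infty \frac{1}{z^{4n+2}},\]
the geometric series converging locally uniformly on $|z|>1$. Since the region $\{\,|z|>1\,\}$ intersected with the first quadrant is simply connected, term-by-term integration of this series is valid, and using $\int z^{-4n-2}\,dz=-\frac{1}{(4n+1)z^{4n+1}}$ it yields
\[f(z)=C+\frac{2}{\pi}\sum_{n=0}^\infty \frac{1}{(4n+1)z^{4n+1}}\]
for some constant $C$.

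It then remains to identify $C$. As the series vanishes when $z\to\infty$ inside the first quadrant, we have $C=\lim_{z\to\infty}f(z)$, and I would evaluate this limit from \eqref{logform} together with the mapping properties established in the proof of Proposition \ref{analytic}. As $z\to\infty$ in the first quadrant, $\frac{1+z}{1-z}\to-1$ through the region $\{\Im w>0,\ |w|>1\}$, so $\log\frac{1+z}{1-z}\to i\pi$; likewise $\frac{1+iz}{1-iz}\to-1$ through $\{\Im w>0,\ |w|<1\}$, so $\log\frac{1+iz}{1-iz}\to i\pi$. Hence
\[C=\frac{1}{2\pi}\bigl(i\pi+i\cdot i\pi\bigr)=\frac{1}{2\pi}(i\pi-\pi)=\frac{i-1}{2},\]
which is exactly \eqref{power2}.

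I expect the main obstacle to be this last step: pinning down the two limiting arguments. The difficulty is purely one of branch-tracking — one must verify that as $z\to\infty$ each M\"obius image approaches $-1$ from the correct side (the upper half-plane in both cases), so that the principal logarithm tends to $+i\pi$ rather than $-i\pi$; the quadrant analysis of Proposition \ref{analytic} supplies precisely this information. An alternative route that avoids isolating the constant is to manipulate \eqref{logform} directly: writing $\frac{1+z}{1-z}=-\frac{1+1/z}{1-1/z}$ and $\frac{1+iz}{1-iz}=-\frac{1+1/(iz)}{1-1/(iz)}$, one extracts a factor contributing $i\pi$ from each logarithm (checking, as above, that this branch choice is the correct one) and expands the remainder via $\log\frac{1+w}{1-w}=2\sum_{k\ge0}w^{2k+1}/(2k+1)$; the odd powers of $1/z$ then cancel in pairs, leaving only the exponents $4n+1$ with the stated coefficients.
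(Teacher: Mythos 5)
Your primary argument is correct, and it follows a genuinely different route from the paper. You differentiate the integral representation \eqref{ps f}, expand $f'(z)=\frac{2}{\pi}\frac{z^2}{1-z^4}$ in powers of $1/z$, integrate the series termwise (justified by locally uniform convergence and Weierstrass's theorem, plus connectedness of the region to pin down a single constant), and then identify the constant as $\lim_{z\to\infty}f(z)$ computed from \eqref{logform} using the quadrant-to-half-plane mapping properties already recorded in the proof of Proposition \ref{analytic}; the key branch point is that both M\"obius images approach $-1$ from the \emph{upper} half-plane, where the principal logarithm extends continuously with limit $+i\pi$. The paper instead works directly with \eqref{logform}: it specializes to the ray $z=(1+i)x$, $x\to\infty$, computes first-order asymptotics of $\frac{z^{-1}+1}{z^{-1}-1}$ and $\frac{(iz)^{-1}+1}{(iz)^{-1}-1}$ to see on which side of $-1$ they lie, extracts the resulting $\pi i$ from each logarithm, expands $\log\frac{1+w}{1-w}$ in its Taylor series, and finally extends from the ray to the whole region $\{|z|>1\}$ in the first quadrant by analytic continuation (uniqueness of analytic functions). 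Your second, ``alternative'' route is essentially the paper's proof, with the branch verified by the mapping property rather than by ray asymptotics. What your main approach buys is the avoidance of both the asymptotic computation and the final continuation step, at the cost of invoking termwise integration of series and a boundary-continuity argument for $\log$ at $-1$; the paper's approach keeps everything at the level of elementary expansions but must do the continuation. One stylistic remark: simple connectivity is not what you need for the termwise integration --- what matters is that the antiderivative series converges locally uniformly (hence defines an analytic function with the right derivative) and that the region is \emph{connected}, so that $f$ minus that function is a single constant.
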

\begin{proof}
Take $z=(1+i)x$ with $x>1$ very large. Then by \eqref{logform}
\[f(z)=\frac{1}{2\pi}\Bigl(\log\frac{z^{-1}+1}{z^{-1}-1}+i\log \frac{(iz)^{-1}+1}{
(iz)^{-1}-1}\Bigr)\]
so that we have, for $x\to\infty$, 
\[\frac{z^{-1}+1}{z^{-1}-1}=-1+\frac{i-1}{x}+\Orden(x^{-2}),\qquad 
\frac{(iz)^{-1}+1}{(iz)^{-1}-1}=-1+\frac{1+i}{x}+\Orden(x^{-2}).\]
It follows that for the main branch of $\log$ we have
\[\log \frac{z^{-1}+1}{z^{-1}-1}=\pi i +\log \frac{1+z^{-1}}{1-z^{-1}},\quad
\log \frac{(iz)^{-1}+1}{(iz)^{-1}-1}=\pi i +\log \frac{1+(iz)^{-1}}{1-(iz)^{-1}}.\]
Therefore, for these values of $z$ we have
\[f(z)=\frac{i-1}{2}+
\frac{1}{2\pi}\Bigl(\log\frac{1+z^{-1}}{1-z^{-1}}+i\log \frac{1+(iz)^{-1}}{1-(iz)^{-1}}
\Bigr)\]
so that we only need to use the known expansion in Taylor series to get the equality 
for $z=(1+i)x$ with $x>1$.  Since $f(z)$ and the expansion are both analytic 
for $|z|>1$ on the first quadrant, we get the equality for $z$ in the first quadrant
with $|z|>1$. 
\end{proof}

\begin{thm}
We have 
\begin{equation}
\int_0^1\arctan\Bigl(\frac{\arctanh x-\arctan x}{\pi+\arctanh x-\arctan x}\Bigr)
\frac{dx}{x}= \frac{\pi}{8}\log\frac{\pi^2}{8}.
\end{equation}
\end{thm}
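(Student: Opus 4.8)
The plan is to realise $I$ via \eqref{firstvalue} as the imaginary part of $\int_0^1 G(z)\,dz$, with $G$ the function of Proposition \ref{analytic}, and then to move the contour off the real axis by Cauchy's Theorem. Writing $h(z)=1+(1+i)f(z)$, so that $G(z)=z^{-1}\log h(z)$ by \eqref{G}, I would integrate $G$ around the boundary of the quarter disk $D=\{\,|z|<1\,\}\cap\{\Re z>0,\ \Im z>0\,\}$. By Proposition \ref{analytic} the integrand is analytic on $D$ and on its two straight sides, the only singularities being the mild (doubly logarithmic) ones at the corners $z=1$ and $z=i$; indenting there by small quarter-circles and letting their radius tend to $0$ shows that their contribution vanishes, so Cauchy's Theorem gives $\oint_{\partial D}G\,dz=0$. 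The power series \eqref{power1} involves only exponents $\equiv 3\pmod 4$, whence $f(iz)=-if(z)$ and therefore $h(iy)=1+(1-i)f(y)=\overline{h(y)}$ for $y$ real; this yields $G(iy)=-i\,\overline{G(y)}$ and hence $\int_0^{i}G\,dz=\overline{\int_0^1 G\,dz}$. Feeding this into $\oint_{\partial D}G\,dz=0$ collapses the two straight sides into $J-\overline J$, where $J=\int_0^1 G\,dz$, so that the arc integral equals $\overline J-J=-2i\,\Im J=-2iI$.

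Parametrising the quarter-circle by $z=e^{i\theta}$ turns this into $I=-\tfrac12\int_0^{\pi/2}\log h(e^{i\theta})\,d\theta$; since $I$ is real, only the real part survives (the imaginary part vanishes automatically, as is also clear from the reflection $\theta\mapsto\frac\pi2-\theta$, under which $\arg h$ changes sign), and I obtain $I=-\tfrac12\int_0^{\pi/2}\log|h(e^{i\theta})|\,d\theta$. The decisive step is to recognise the arc values of $h$. Combining the expansion \eqref{power2} with $(1+i)\tfrac{i-1}{2}=-1$ gives, for $|z|>1$ in the first quadrant, $h(z)=\frac{2(1+i)}{\pi z}\sum_{n\ge0}\frac{1}{(4n+1)z^{4n}}$, so that the normalised function $\phi(z):=\frac{\pi z}{2(1+i)}h(z)$ satisfies $\phi(z)=\sum_{n\ge0}(4n+1)^{-1}z^{-4n}=\Phi(z^{-4})$, where $\Phi(w):=\sum_{n\ge0}\frac{w^n}{4n+1}$. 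On $|z|=1$ we have $|h|=\tfrac{2\sqrt2}{\pi}\,|\phi|$, so that
\[
\int_0^{\pi/2}\log\bigl|h(e^{i\theta})\bigr|\,d\theta=\frac{\pi}{2}\log\frac{2\sqrt2}{\pi}+\int_0^{\pi/2}\log\bigl|\Phi(e^{-4i\theta})\bigr|\,d\theta .
\]

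It remains to show that the last integral vanishes, and this is the heart of the matter. The substitution $\omega=4\theta$ rewrites it as $\tfrac14\int_0^{2\pi}\log|\Phi(e^{i\omega})|\,d\omega$, the full-circle average of $\log|\Phi|$. Now $\Phi$ is analytic on the unit disk with $\Phi(0)=1$, and the integral representation $\Phi(w)=\int_0^1\frac{dt}{1-wt^4}$ gives $\Re\Phi(w)=\int_0^1\frac{1-t^4\Re w}{|1-wt^4|^2}\,dt>0$ whenever $|w|<1$, since then $t^4\Re w\le t^4|w|<1$. Thus $\Phi$ is zero-free on the disk and $\log|\Phi|$ is harmonic there, so the mean value property forces $\tfrac1{2\pi}\int_0^{2\pi}\log|\Phi(e^{i\omega})|\,d\omega=\log|\Phi(0)|=0$ (a limiting argument as the radius tends to $1$ absorbs the integrable singularity of $\Phi$ at $w=1$). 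Consequently $I=-\tfrac12\cdot\tfrac{\pi}{2}\log\frac{2\sqrt2}{\pi}=\tfrac{\pi}{4}\log\frac{\pi}{2\sqrt2}=\tfrac{\pi}{8}\log\frac{\pi^2}{8}$, as claimed. I expect the main obstacle to be exactly the identification of $\phi$ with $\Phi(z^{-4})$ and the positivity $\Re\Phi>0$: once the arc integrand is seen to be the boundary value of a zero-free disk function normalised by $\Phi(0)=1$, the value falls out of the mean value property, whereas a direct real evaluation of $\int_0^{\pi/2}\log|h(e^{i\theta})|\,d\theta$ appears forbidding.
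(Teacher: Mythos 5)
Your proof is correct, and after the common first step it takes a genuinely different route from the paper's. Both arguments apply Cauchy's Theorem to $G$ of \eqref{G} with indentations at $z=1$ and $z=i$, and both use the symmetry $f(iz)=-if(z)$ coming from \eqref{power1} to fold the two straight sides inside the unit circle into $-2iI$, exactly as you do via $\int_0^i G\,dz=\overline{\int_0^1G\,dz}$. But the paper integrates over the quarter disk of \emph{radius} $R>1$: the extra segments $[1,R]$ and $[i,iR]$ are combined using \eqref{power2} to give $\frac{\pi i}{2}\log R$, while the large arc is evaluated only \emph{asymptotically}, as $-\frac{\pi i}{2}\log R+\frac{\pi i}{2}\log\frac{2\sqrt2}{\pi}+\Orden(R^{-4})$, and letting $R\to\infty$ cancels the $\log R$'s and kills the error. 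You instead keep the contour at radius $1$, so there are no outer segments and no limit in $R$, but you must then evaluate the unit-circle arc integral \emph{exactly}; your key lemma --- that $\Phi(w)=\sum_{n\ge0}w^n/(4n+1)$ has $\Re\Phi>0$ on the disk, hence is zero-free, so the mean value property of the harmonic function $\log|\Phi|$ forces $\int_0^{2\pi}\log|\Phi(e^{i\omega})|\,d\omega=2\pi\log|\Phi(0)|=0$ --- is what replaces the paper's limiting process. (Amusingly, this Jensen-type identity is precisely the content of the paper's footnote remarking that its $\Orden(R^{-4})$ term is in fact $=0$.) The trade-off: the paper needs only the leading term of \eqref{power2} and crude error bounds, at the cost of juggling three pieces with mutually cancelling $\log R$ divergences; your version is tighter and more conceptual, but leans on two boundary-regularity points that you state only briefly and should spell out: (i) the exterior formula $h(z)=\frac{2(1+i)}{\pi z}\Phi(z^{-4})$ persists on $|z|=1$, $z\ne 1,i$, which holds because your representation $\Phi(w)=\int_0^1 dt/(1-wt^4)$ continues $\Phi$ analytically across the unit circle away from $w=1$; and (ii) the passage $r\to1$ in the mean value property, which becomes routine dominated convergence once one notes that the same representation gives $\Re\Phi(w)\ge\int_0^1\frac{1-t^4}{4}\,dt=\frac15$ on the closed disk (so $\log|\Phi|$ is bounded below) together with the upper bound $|\Phi(w)|=\Orden\bigl(\log|1-w|^{-1}\bigr)$. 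Neither point is a gap in substance; both follow from the integral representation you already introduced.
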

\begin{proof}
We apply Cauchy's Theorem to $G(z)$ 
\[\int_{C_{R,\varepsilon}}G(z)\,dz=0.\]
Here $C_{R,\varepsilon}$ is the path shown in Figure \ref{fig}. We avoid the singularities
by small semicircles of radius $\varepsilon>0$, where $0<\varepsilon<1<R$.
\begin{figure}[H]
  \includegraphics[width=6.8cm]{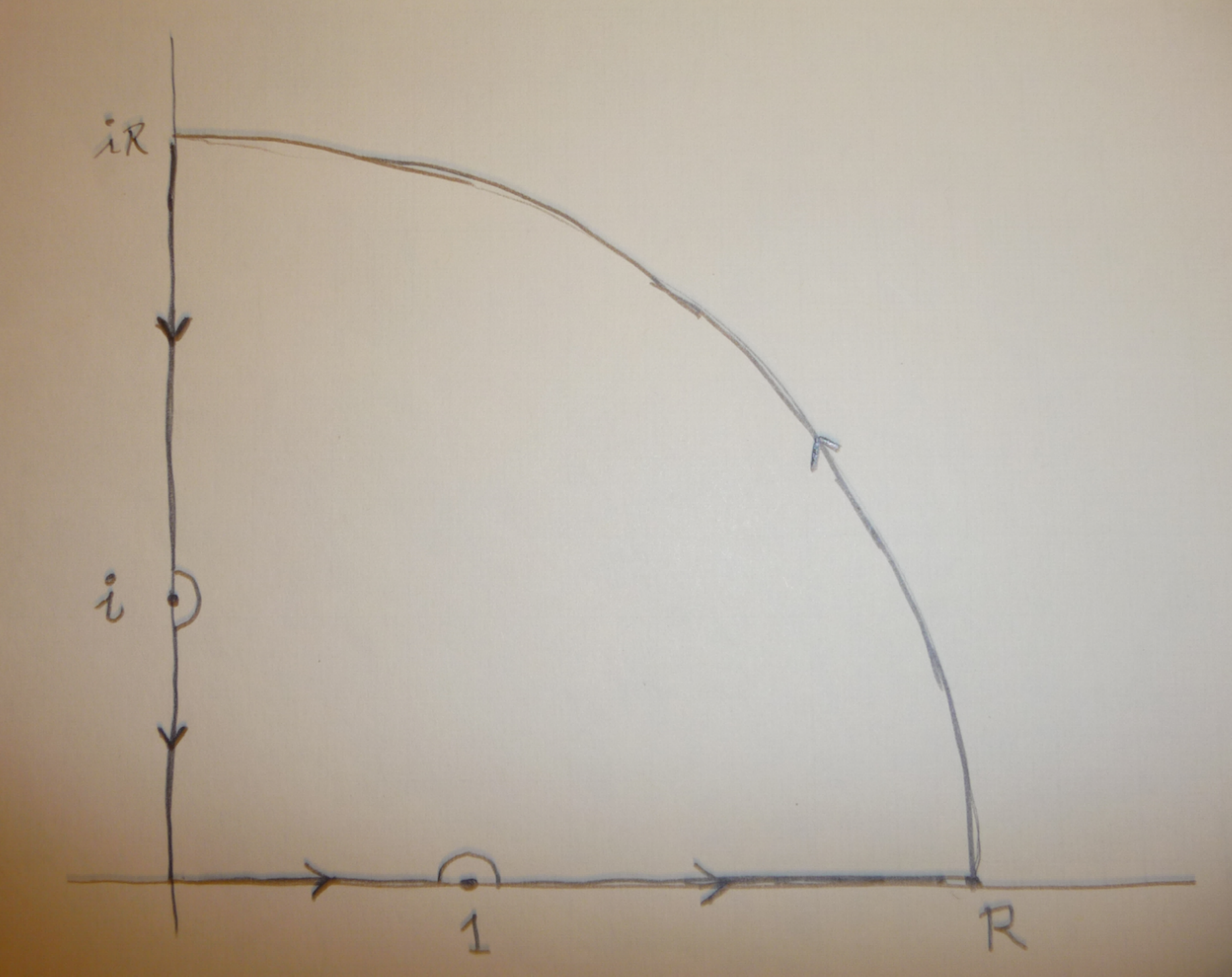}
  \caption{Path of integration.}\label{fig}
\end{figure}

The integrals along the small semicircles tend to $0$ when $\varepsilon\to 0$,
so that  by Cauchy's Theorem 
\begin{equation}\label{Cauchy1}
\int_0^1 G(x)\,dx+\int_1^R G(x)\,dx+iR\int_0^{\pi/2} G(R e^{ix}) e^{ix}\,dx
-\int_0^i G(z)\,dz-i\int_1 ^{R} G(iy)\,dy=0.
\end{equation}
Two of the integrals combine to give our integral. In fact
\[\int_0^1 G(x)\,dx-\int_0^i G(z)\,dz=\int_0^1 G(x)\,dx-i\int_0^1 G(ix)\,dx,\]
and we have 
\[G(x)-iG(x)=\frac{1}{x}\log\bigl(1+(1+i)f(x)\bigr)-\frac{i}{ix}
\log\bigl(1+(1+i)f(ix)\bigr).\]
By \eqref{power1} for $0<x<1$ we have $f(ix)=-if(x)$, so that 
\begin{multline*}
G(x)-iG(x)=\frac{\log\bigl(1+(1+i)f(x)\bigr)-
\log\bigl(1+(1-i)f(x)\bigr)}{x}\\
=\frac{2i}{x}\Im\log\bigl(1+(1+i)f(x)\bigr).
\end{multline*}
Then our two integrals are
\[\int_0^1 G(x)\,dx-\int_0^i G(z)\,dz=2i\int_0^1\Im\log\bigl(1+(1+i)f(x)\bigr)
\frac{dx}{x}.\]
Applying  \eqref{firstvalue} we get 
\begin{equation}
\int_0^1 G(x)\,dx-\int_0^i G(z)\,dz=2Ii.
\end{equation}
Our equation \eqref{Cauchy1}, obtained by Cauchy's Theorem, may now be written as
\begin{equation}\label{Cauchy2}
2iI+\int_1^R G(x)\,dx+iR\int_0^{2\pi} G(R e^{ix}) e^{ix}\,dx
-i\int_1 ^{R} G(iy)\,dy=0.
\end{equation}
In a similar way we also find that
\[\int_1^R G(x)\,dx-i\int_1 ^{R} G(iy)\,dy=\int_1^R(G(x)-iG(ix))\,dx\]
with
\[G(x)-iG(ix)=\frac{1}{x}\bigl\{\log\bigl(1+(1+i)f(x)\bigr)-
\log\bigl(1+(1+i)f(ix)\bigr)\bigr\}.\]
Here $x>1$ and we substitute the values of $f(x)$ and $f(ix)$ given 
by \eqref{power2}
\[1+(1+i)f(x)=\frac{2(1+i)}{\pi}\sum_{n=0}^\infty \frac{1}{(4n+1)x^{4n+1}},\]
\[1+(1+i)f(ix)=\frac{2(1-i)}{\pi}\sum_{n=0}^\infty \frac{1}{(4n+1)x^{4n+1}}.\]
We arrive at
\begin{multline*}
\int_1^R G(x)\,dx-i\int_1 ^{R} G(iy)\,dy\\=
\int_1^R\Bigl\{\log\Bigl(\frac{2(1+i)}{\pi}\sum_{n=0}^\infty \frac{1}{(4n+1)x^{4n+1}}
\Bigr)-\log\Bigl(\frac{2(1-i)}{\pi}\sum_{n=0}^\infty \frac{1}{(4n+1)x^{4n+1}}\Bigr)
\Bigr\}\frac{dx}{x}.
\end{multline*}
Both logarithms have the same real part. Therefore, only the integrals 
of the imaginary parts remain
\[\int_1^R G(x)\,dx-i\int_1 ^{R} G(iy)\,dy=\int_1^R\frac{\pi i}{2}\frac{dx}{x}=\frac{\pi i}{2}\log R.\]
Substituting this in  \eqref{Cauchy2} we get
\begin{equation}\label{Cauchy3}
2iI+\frac{\pi i}{2}\log R+iR\int_0^{\pi/2} G(R e^{ix}) e^{ix}\,dx
=0.
\end{equation}

The last integral can be transformed in the following way
\begin{multline*}
iR\int_0^{\pi/2} G(R e^{ix}) e^{ix}\,dx=iR\int_0^{\pi/2} 
\frac{1}{Re^{ix}}\log\bigl(1+(1+i)f(Re^{ix})\bigr)
e^{ix}\,dx\\
=i\int_0^{\pi/2}\log\bigl(1+(1+i)f(Re^{ix})\bigr)\,dx.
\end{multline*}
Since $R>1$ and $Re^{ix}$ is in the first quadrant,  the function $f(z)$ can be
computed by \eqref{power2}. 
\[1+(1+i)f(Re^{ix})=\frac{2(1+i)}{\pi}\sum_{n=0}^\infty\frac{e^{-ix(4n+1)}}{(4n+1)R^{4n+1}}=
\frac{2(1+i)}{\pi R}e^{-ix}+\frac{2(1+i)}{5\pi R^5}e^{-i5x}+\cdots\]
Then for $R$ large enough 
\[\log\bigl(1+(1+i)f(Re^{ix})\bigr)=\log\Bigl(\frac{2(1+i)}{\pi R}e^{-ix}\Bigr)+
\log\Bigl(1+\frac{e^{-i4x}}{5R^4}+\frac{e^{-i8x}}{9R^8}+\cdots\Bigr)\]
and
\[i\int_0^{\pi/2}\log\bigl(1+(1+i)f(Re^{ix})\bigr)\,dx=
i\int_0^{\pi/2}\Bigl\{\log\frac{2\sqrt{2}}{\pi R}+\Bigl(\frac{\pi}{4}-x\Bigr)i
\Bigr\}\,dx +\Orden(R^{-4})\]
or\footnote{The term $\Orden(R^{-4})$ is  $=0$ but we do not need to prove this.} 
\[i\int_0^{\pi/2}\log\bigl(1+(1+i)f(Re^{ix})\bigr)\,dx=-\frac{\pi i}{2}\log R+
\frac{\pi i}{2}\log\frac{2\sqrt{2}}{\pi }+\Orden(R^{-4}).\]
Substituting this in \eqref{Cauchy3} we get
\begin{equation}\label{Cauchy4}
2iI+
\frac{\pi i}{2}\log\frac{2\sqrt{2}}{\pi }+\Orden(R^{-4})
=0.
\end{equation}
Taking limits for $R\to\infty$ we get 
\[I=-\frac{\pi}{4}\log\frac{2\sqrt{2}}{\pi }=\frac{\pi}{8}\log\frac{\pi^2}{8}.\]
\end{proof}
\section{Acknowledgement}

I wish to express my thanks to 
Jan van de Lune  ( Hallum, The Netherlands ) for his encouragements and linguistic assistance.

\end{document}